\newtheorem{theorem}{Theorem}
\newtheorem{corollary}{Corollary}
\newtheorem{lemma}{Lemma}
\numberwithin{equation}{section}
\begin{document}
\begin{center} \large  \sc Reducing bias in nonparametric density estimation via bandwidth dependent kernels: $L_{1}$ view\normalsize \rm \footnote{We thank an anonymous referee and an Associate Editor for excellent comments that improved this note significantly.}  \\[0.25in]
\begin{tabular}{l}
\multicolumn{1}{c}{ \sc Kairat Mynbaev}\\[.2in]
International School of Economics\\
Kazakh-British Technical University\\
Tolebi 59\\
Almaty 050000, Kazakhstan\\
email: kairat\_mynbayev@yahoo.com\\
\end{tabular}\\[.1in]
\begin{center}
and\\[.1in]
\end{center}
\begin{tabular}{lcl}
\multicolumn{3}{c}{ \sc Carlos Martins-Filho}\\[.2in]
Department of Economics &  &IFPRI \\
University of Colorado &  & 2033 K Street NW  \\
Boulder, CO 80309-0256, USA& \&&Washington, DC 20006-1002, USA\\
email: carlos.martins@colorado.edu& & email: c.martins-filho@cgiar.org\\
\end{tabular}\\[.2in]

November, 2016\\[.2in]
\end{center}
\noindent\bf Abstract. \rm  We define a new bandwidth-dependent kernel density estimator that improves existing convergence rates for the bias, and preserves that of the variation, when the error is measured in $L_1$.  No additional assumptions are imposed to the extant literature.  \\[.1in]

\noindent \bf Keywords and phrases. \rm Kernel density estimation, higher order kernels, bias reduction. \rm \\[.1in]
\noindent \bf AMS-MS Classification. \rm 62G07, 62G10, 62G20.
\clearpage
\setlength{\baselineskip}{24pt}
\pagestyle{plain}
\setcounter{page}{1}
\setcounter{footnote}{0}
\section{Introduction}
Given a sequence of $n \in \mathds{N}$ independent realizations $\{X_j\}_{j=1}^n$ of the random variable $X$, having density $f$ on $\mathds{R}$, the Rosenblatt-Parzen kernel estimator (\cite{Rosenblatt1956}, \cite{Parzen1962}) of $f$  is given by
\begin{equation}\label{first}
f_{n}(x)=\frac{1}{n}\sum_{j=1}^{n}(S_{h_n}K)(x-X_{j}),
\end{equation}
where $S_{h_n}$ is an operator defined by
\begin{equation}\label{1}
(S_{h_n}K)(x)=\frac{1}{h_n}K\left( \frac{x}{h_n}\right),
\end{equation}
$K$ is a kernel, i.e., a function on $\mathds{R}$ such that $\int K(x)dx=1$ and $h_n>0$ is a non-stochastic bandwidth such that $h_n \rightarrow 0$ as $n\rightarrow \infty$.\footnote{Throughout this note, integrals are over $\mathds{R}$, unless otherwise specified.}

One of the most natural and mathematically sound (\cite{Devroye1985}, \cite{Devroye1987}) criteria to measure the performance of $f_{n}$ as an estimator of $f$ is the $L_{1}$ distance $\int |f_{n}-f|$.  In particular, given that this distance is a random variable (measurable function of $\{X_j\}_{j=1}^n$) it is convenient to focus on $E \left( \int |f_{n}-f| \right)$, where $E$ denotes the expectation taken using $f$.  For this criterion, there is a simple bound \cite[p. 31]{Devroye1987}
\begin{equation*}
E\left( \int |f_{n}-f|\right) \leq  \int | (f \ast S_{h_n}K)-f|+E\left(  \int |f_{n}-f\ast S_{h_n}K| \right),
\end{equation*}
where for arbitrary $f,g \in L_1$, $(f \ast g)(x)=\int g(y)f(x-y)dy$ is the convolution of $f$ and $g$.  The term $\int |f \ast S_{h_n}K-f|$ is called bias over $\mathds{R}$ and $E \left( \int |f_{n}-f\ast S_{h_n}K| \right)$ is called the variation over $\mathds{R}$.  There exists a large literature devoted to establishing conditions on $f$ and $K$ that assure suitable rates of convergence of the bias to zero as $n \rightarrow \infty$ (see, \it inter alia\rm, \cite{Silverman1986}, \cite{Devroye1987} and \cite{Tsybakov2009}).  In particular, if $K$ is of order $s$, i.e., $\alpha _{j}(K)=0$ for $j=1,...,s-1$ and $\alpha _{s}(K)\neq 0$, where $\alpha _{j}(K)=\int t^{j}K(t)dt$ is the $j$th moment of $K$, and $f$ has an integrable derivative $f^{(s)}$, then $\int |f \ast S_{h_n}K-f|$ is of order $O(h_n^{s})$ and this order cannot be improved, see, e.g., \cite[Theorem 7.2]{Devroye1987}.  In this note, we show that if in \eqref{1} the kernel is allowed to depend on $n$, then the order $O(h_n^{s})$ can be replaced by the order $o(h_n^{s})$, without increasing the order of the kernel or the smoothness of the density.   In addition, another result from \cite{Devroye1987} states that if $K$ is a kernel of order greater than $s$ and the derivative $f^{(s)}$ is $a$-Lipschitz then the bias is of order $O(h_n^{s+a})$.  We achieve the same rate of convergence with kernels of order $s$.

\section{Main results}

Let $L_{1}$ and $C$ denote the spaces of integrable and (bounded) continuous functions on $\mathds{R}$ with norms $\left\Vert f\right\Vert _{1}=\int |f|$ and $\left\Vert f\right\Vert _{C}=\sup |f|$, and $\beta _{s}(K)=\int |t|^{s}\left\vert K(t)\right\vert dt$.  Let $\{K_n\}$ be a sequence of kernels and define
\begin{equation*}
\hat{f}_n(x) = \frac{1}{n}\sum_{j=1}^n(S_{h_n}K_n)(x-X_j).
\end{equation*}
In the following Theorem \ref{thm1}, the density $f$ has the same degree of smoothness and the kernels $K_n$ are of the same order as in \cite[Theorem 7.2]{Devroye1987}, but the bias is of order $o(h_n^{s})$ instead of $O(h_n^{s})$.  This results because the kernels depend on $n$ and have ``disappearing" moments of order $s$.

\begin{theorem}\label{thm1}
Let $\{K_n\}$ be a sequence of kernels of order $s$ such that: 1. $\alpha_s(K_n) \rightarrow 0$; 2. $\{u^sK_n(u)\}$ is uniformly integrable.  For all $f$ with absolutely continuous $f^{(s-1)}$ and $f^{(s)} \in L_1$, we have $\|f\ast S_{h_n}K_n-f\|_1=o(h_n^{s})$.
\end{theorem}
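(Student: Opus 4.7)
The plan is to perform a Taylor expansion of $f(x-h_nu)$ to order $s$ in integral-remainder form, use the vanishing first $s-1$ moments of $K_n$ to eliminate the polynomial part, and then split what remains into a ``leading'' piece that vanishes by condition~1 and an integrated remainder controlled jointly by the $L_1$-continuity of translation and the uniform integrability hypothesis.

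After the substitution $y=h_nu$, the bias reads $(f\ast S_{h_n}K_n)(x)-f(x)=\int K_n(u)[f(x-h_nu)-f(x)]\,du$. Absolute continuity of $f^{(s-1)}$ licenses Taylor's formula with integral remainder, so
\[
f(x-h_nu)-f(x)=\sum_{j=1}^{s-1}\frac{(-h_nu)^j}{j!}f^{(j)}(x)+\frac{(-h_nu)^s}{(s-1)!}\int_0^1(1-\tau)^{s-1}f^{(s)}(x-\tau h_nu)\,d\tau.
\]
Integrating against $K_n(u)$ kills the polynomial sum because $\alpha_j(K_n)=0$ for $1\le j\le s-1$. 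Adding and subtracting $f^{(s)}(x)$ inside the remainder yields the decomposition
\[
(f\ast S_{h_n}K_n)(x)-f(x)=\frac{(-h_n)^s\alpha_s(K_n)}{s!}f^{(s)}(x)+R_n(x),
\]
with $R_n(x)=\frac{(-h_n)^s}{(s-1)!}\int u^sK_n(u)\int_0^1(1-\tau)^{s-1}\bigl[f^{(s)}(x-\tau h_nu)-f^{(s)}(x)\bigr]d\tau\,du$.

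The first term has $L_1$-norm $\frac{h_n^s|\alpha_s(K_n)|}{s!}\|f^{(s)}\|_1$, which is $o(h_n^s)$ by condition~1. For $R_n$, Fubini and translation invariance of $\|\cdot\|_1$ give
\[
\|R_n\|_1\le \frac{h_n^s}{(s-1)!}\int |u|^s|K_n(u)|\int_0^1(1-\tau)^{s-1}\|f^{(s)}(\cdot-\tau h_nu)-f^{(s)}\|_1\,d\tau\,du.
\]
Given $\varepsilon>0$, I would pick $M$ so that condition~2 yields $\sup_n\int_{|u|>M}|u|^s|K_n(u)|\,du<\varepsilon$ and bound the translation difference by $2\|f^{(s)}\|_1$ on that tail; on $|u|\le M$, the $L_1$-continuity of translation applied to $f^{(s)}$ forces $\|f^{(s)}(\cdot-\tau h_nu)-f^{(s)}\|_1\to 0$ uniformly in $\tau\in[0,1]$ and $|u|\le M$ as $h_n\to 0$, while $\sup_n\int_{|u|\le M}|u|^s|K_n(u)|\,du$ stays finite (uniform integrability also supplies $\sup_n\beta_s(K_n)<\infty$). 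Summing the two contributions, letting $n\to\infty$ first and then $\varepsilon\to 0$, gives $\|R_n\|_1=o(h_n^s)$.

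The main obstacle is exactly the interaction in this last step: the translation factor $\|f^{(s)}(\cdot-\tau h_nu)-f^{(s)}\|_1$ is small only for $|u|$ in a bounded range, while controlling the $u$-integral over the complementary tail demands a bound that holds uniformly in $n$. This is precisely what condition~2 delivers; without it one recovers only the classical $O(h_n^s)$ rate through $\beta_s(K_n)\|f^{(s)}\|_1/s!$.
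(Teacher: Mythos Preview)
Your proposal is correct and follows essentially the same route as the paper: Taylor expansion with integral remainder, elimination of the polynomial part via the order-$s$ property, the add-and-subtract of $f^{(s)}(x)$ to isolate the $\alpha_s(K_n)$ term, and a tail/core split of the $u$-integral balancing $L_1$-continuity of translation against uniform integrability. The only cosmetic difference is that the paper takes the $n$-dependent cutoff $|u|=1/\sqrt{h_n}$ and phrases the inner estimate through the $L_1$ modulus of continuity $\omega(\delta)$, whereas you use a fixed cutoff $M$ chosen from an $\varepsilon$-argument; the two devices are equivalent.
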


\begin{proof}
Note that since $K_n$ is a kernel
\begin{equation}\label{27}
f\ast S_{h_n}K_n(x)-f(x)=\int K_n(t)[f(x-h_nt)-f(x)]dt.
\end{equation}
Since $f$ is $s$-times differentiable, by Taylor's Theorem,
\begin{equation*}
f(x-h_nt)-f(x)=\sum_{j=1}^{s-1}\frac{f^{(j)}(x)}{j!}(-h_nt)^{j}+\int_{x}^{x-h_nt}\frac{
(x-h_nt-u)^{s-1}}{(s-1)!}f^{(s)}(u)du.
\end{equation*}
Furthermore, given that $K_n$ is of order $s$,
\begin{equation}\label{doisoito}
f\ast S_{h_n}K_n(x)-f(x)=\frac{1}{(s-1)!} \int \int_x^{x-h_nt}(x-h_nt-u)^{s-1}f^{(s)}(u)du K_n(t)dt.
\end{equation}
Letting $\lambda = -\frac{u-x}{h_nt}$ we have
\begin{equation}\label{doisnove}
\int_x^{x-h_nt}(x-h_nt-u)^{s-1}f^{(s)}(u)du=(-h_nt)^s \int_0^1 f^{(s)}(x-h_n\lambda t)(1-\lambda)^{s-1}d\lambda.
\end{equation}
Substituting \eqref{doisnove} into \eqref{doisoito} we obtain
\begin{equation}\label{210}
f\ast S_{h_n}K_n(x)-f(x)=\frac{(-h_n)^s}{s!}\int \int_0^1 f^{(s)}(x-h_n\lambda t)s(1-\lambda)^{s-1}d\lambda t^sK_n(t)dt.
\end{equation}
Since $\int_0^1(1-\lambda)^{s-1}d\lambda = \frac{1}{s}$, we have that
\begin{equation}\label{211}
\frac{(-h_n)^s}{(s-1)!}\int \int_0^1f^{(s)}(x)(1-\lambda)^{s-1}d\lambda t^sK_n(t)dt=\frac{(-h_n)^s}{s!}f^{(s)}(x)\int t^s K_n(t)dt.
\end{equation}
Then, adding and subtracting \eqref{211} to the right-hand side of \eqref{210} gives
\begin{align*}
f\ast S_{h_n}K_n(x)-f(x)&=\frac{(-h_n)^s}{s!} \left( f^{(s)}(x)\alpha_s(K_n)+\int \int_0^1 [f^{(s)}(x-h_n\lambda t)-f^{(s)}(x)]s(1-\lambda)^{s-1}d \lambda t^sK_n(t)dt \right) .
\end{align*}
Since $f^{(s)}\in L_{1}$ we write its continuity modulus as $\omega(\delta)=\sup_{|t|\leq \delta }\int\left\vert f^{(s)}(x-t)-f^{(s)}(x)\right\vert dx$.  It is well-known (see properties M.2, M.6 and M.7 in \cite{Zhuk2003}) that
\begin{equation} \label{212}
\omega (\delta)\leq 2\left\Vert f^{(s)}\right\Vert _{1},\ \omega \text{ is
nondecreasing and }\lim_{\delta \rightarrow 0}\omega (\delta )=0.
\end{equation}
Then,
\begin{align}
\|f\ast S_{h_n}K_n-f\|_1& \leq  \frac{h_n^s}{s!} \left[ \left\Vert f^{(s)}\right\Vert _{1}\left\vert
\alpha _{s}(K_n)\right\vert+\int\int_{0}^{1}\int\left\vert f^{(s)}(x-h_n\lambda t)-f^{(s)}(x)\right\vert dx\,s(1-\lambda)^{s-1}d\lambda| t^{s}K_n(t)| dt\right] \notag\\
&\leq  \frac{h_n^s}{s!} \left[ \left\Vert f^{(s)}\right\Vert _{1}\left\vert
\alpha _{s}(K_n)\right\vert+\int_{0}^{1}\int \omega(\lambda h_n |t|) \left\vert\ t^{s}K_n(t)\right\vert dt \, s(1-\lambda)^{s-1}d\lambda\right] \notag \\
&=  \frac{h_n^s}{s!} \left[ \left\Vert f^{(s)}\right\Vert _{1}\left\vert
\alpha _{s}(K_n)\right\vert+\int_{0}^{1}\left( \int_{|t|\leq \frac{1}{\sqrt{h_n}}} + \int_{|t|>\frac{1}{\sqrt{h_n}}}  \right)\omega(\lambda h_n |t|)  |t^{s}K_n(t)| dt \, s(1-\lambda)^{s-1}d\lambda\right]  \label{213} \\
& \leq \frac{h_n^s}{s!} \left[ \left\Vert f^{(s)}\right\Vert _{1}\left\vert
\alpha _{s}(K_n)\right\vert+\omega\left(\sqrt{h_n}\right) \beta_s(K_n)+ 2  \left\Vert f^{(s)}\right\Vert _{1}\int_{|t|>\frac{1}{\sqrt{h_n}}} \left\vert t^{s}K_n(t)\right\vert dt \right].\label{208}
\end{align}
Given that $\alpha_s(K_n)\rightarrow 0$ as $n \rightarrow \infty$, $\{t^s K_n(t)\}$ is uniformly integrable, which implies $\underset{n}{\text{sup}}\,\beta_s(K_n)<\infty$, and using \eqref{212} and \eqref{208} we have
\begin{equation}\label{209}
\|f\ast S_{h_n}K_n-f\|_1=o(h_n^s).
\end{equation}
\end{proof}
\noindent \bf Remark 1. \rm Kernel sequences $\{K_n\}$ that satisfy the restrictions imposed by Theorem \ref{thm1} can be easily constructed.  To this end, denote by $\mathcal{B}_{s}$ the space of functions with bounded norm $\left\Vert K\right\Vert _{\mathcal{B}_{s}}=\beta _{0}(K)+\beta _{s}(K)$.  Take functions $K_{(0)},K_{(s)}\in \mathcal{B}_{s}$ such that
\begin{equation}
\alpha _{0}(K_{(0)})=1,\, \alpha _{j}(K_{(0)})=0\text{ for }j=1,...,s;\,\alpha _{j}(K_{(s)})=0\text{ for }j=0,1,...,s-1,\ \alpha _{s}(K_{(s)})=1.
\label{214}
\end{equation}
We define the $n$-dependent kernel $K_n=K_{(0)}+h_nK_{(s)}$ with $0< h_n \leq 1$.  Note that $K_n$ is a kernel of order $s$ with $\alpha _{s}(K_n)=h_n$ which tends to zero as $n \rightarrow \infty$.  It is clear that any kernel $K$ of order $s$ can be written as $K=K_{(0)}+\alpha_s(K)K_{(s)}$, so that the conventional $s$-order kernels obtain from ours with $\alpha_s(K)=h_n$.  Furthermore, it follows from \eqref{214} that $\{t^s(K_{(0)}(t)+h_nK_{(s)}(t))\}$ is uniformly integrable.

Now, to obtain $K_{(0)}$ and $K_{(s)}$, assume that for a nonnegative kernel $K$ we have $\beta_{2s}(K)<\infty$.  Then, we can associate with $K$ a symmetric matrix
\begin{equation*}
A_{s}=\left(
\begin{array}{cccc}
\alpha _{0}(K) & \alpha _{1}(K) & ... & \alpha _{s}(K) \\
\alpha _{1}(K) & \alpha _{2}(K) & ... & \alpha _{s+1}(K) \\
... & ... & ... & ... \\
\alpha _{s}(K) & \alpha _{s+1}(K) & ... & \alpha _{2s}(K)
\end{array}
\right),
\end{equation*}
such that $\det A_s\neq 0$ (see \cite{Mynbaev2014}). For an arbitrary vector $b\in \mathds{R}^{s+1}$ let $a=A_s^{-1}b$ and define a polynomial transformation of $K$ by $(T_aK)(t)=\big(\sum_{i=0}^sa_it^i\big)K(t)$.  Then, we put $K_{(0)}=T_aK$ with $b=(1,0,...,0)'$ and $K_{(s)}=T_aK$ with $b=(0,...,0,1)'$, which satisfy \eqref{214}.  Thus, we have the following corollary to Theorem \ref{thm1}.
\begin{corollary}
Let $K_n=K_{(0)}+h_nK_{(s)}$ where $K_{(0)}$ and $K_{(s)}$ are as defined in Remark 1.  Then, for all $f$ with absolutely continuous $f^{(s-1)}$ and $f^{(s)} \in L_1$, we have $\|f\ast S_{h_n}K_n-f\|_1=o(h_n^{s})$.
\end{corollary}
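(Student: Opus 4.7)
The plan is to verify that the sequence $K_n=K_{(0)}+h_nK_{(s)}$ fulfills each of the three hypotheses of Theorem \ref{thm1} (being kernels of order $s$, having $\alpha_s(K_n)\to 0$, and having $\{u^sK_n(u)\}$ uniformly integrable), and then to quote Theorem \ref{thm1} directly. There is no genuine obstacle: the corollary is meant as a concrete illustration that the abstract hypotheses of Theorem \ref{thm1} are easy to realize, so the work is bookkeeping on moments plus one uniform-integrability check.

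First I would use linearity of $\alpha_j$ together with the explicit moment conditions \eqref{214} to compute, for $j=0,1,\dots,s$,
\[
\alpha_j(K_n)=\alpha_j(K_{(0)})+h_n\alpha_j(K_{(s)}).
\]
This yields $\alpha_0(K_n)=1+h_n\cdot 0=1$ (so each $K_n$ is a kernel), $\alpha_j(K_n)=0$ for $1\leq j\leq s-1$ (so each $K_n$ is of order at least $s$), and $\alpha_s(K_n)=0+h_n\cdot 1=h_n\neq 0$ for $n$ with $h_n>0$ (so the order is exactly $s$). Since $h_n\to 0$ as $n\to\infty$, hypothesis 1 of Theorem \ref{thm1} is satisfied immediately.

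The step that warrants the most care, though still routine, is hypothesis 2: uniform integrability of $\{u^sK_n(u)\}$. Using the restriction $0<h_n\leq 1$ built into Remark 1, I would dominate pointwise
\[
|u^sK_n(u)|\leq |u^sK_{(0)}(u)|+h_n|u^sK_{(s)}(u)|\leq |u^sK_{(0)}(u)|+|u^sK_{(s)}(u)|.
\]
Both $K_{(0)}$ and $K_{(s)}$ belong to $\mathcal{B}_s$, so $\beta_s(K_{(0)})$ and $\beta_s(K_{(s)})$ are finite and the right-hand side is an integrable function independent of $n$. Domination by a single $L_1$ function yields uniform integrability of the family $\{u^sK_n(u)\}$, which is exactly hypothesis 2.

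Having verified all of the hypotheses of Theorem \ref{thm1} and assuming, as in the statement, that $f^{(s-1)}$ is absolutely continuous and $f^{(s)}\in L_1$, an application of Theorem \ref{thm1} yields $\|f\ast S_{h_n}K_n-f\|_1=o(h_n^s)$, which is the desired conclusion.
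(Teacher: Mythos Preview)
Your proposal is correct and follows exactly the approach the paper intends: the paper does not give a separate proof of the corollary but rather states in Remark~1 that $K_n$ is a kernel of order $s$ with $\alpha_s(K_n)=h_n\to 0$ and that $\{t^sK_n(t)\}$ is uniformly integrable, so the corollary is an immediate application of Theorem~\ref{thm1}. Your write-up simply makes these verifications explicit, including the domination argument for uniform integrability, which the paper leaves implicit.
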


\noindent \bf Remark 2. \rm If $K_{n}$ are supported on $[-M,M]$ for some $M>0$ and for all $n$, then in \eqref{213}, instead of splitting $\mathds{R}=\left\{ \left\vert t\right\vert \leq 1/\sqrt{h_n}\right\} \cup \left\{ \left\vert t\right\vert >1/\sqrt{h_n}\right\}$ we can use $\mathds{R}=\left\{ \left\vert t\right\vert \leq M\right\} \cup \left\{\left\vert t \right\vert >M\right\} $ and then instead of \eqref{208} we get
\begin{equation*}
\left\Vert f\ast S_{h_n}K_{n}-f\right\Vert _{1}\leq \frac{h_n^{s}}{s!}\left[\left\Vert f^{(m)}\right\Vert _{1}\left\vert \alpha _{s}(K_n)\right\vert+\omega (h_nM)\beta _{s}(K_n)\right] .
\end{equation*}
Hence, selecting $\left\{K_{n}\right\} $ in such a way that $\alpha_{s}(K_{n})=O(\omega (h_n))$, $\underset{n}{\sup} \,\beta _{s}(K_{n})<\infty$ and using the fact that $\omega (h_nM)\leq (M+1)\,\omega (h_n)$\footnote{See property M.5 in \cite{Zhuk2003}.} we get a result that is more precise than \eqref{209}, i.e.,
\begin{equation*}
\left\Vert f\ast S_{h_n}K_{n}-f\right\Vert _{1}=O(h_n^{s}\omega (h_n)).
\end{equation*}

\noindent \bf Remark 3. \rm By Young's inequality, the variation of $\hat{f}_n$ using $K_n=K_{(0)}+h_nK_{(s)}$ is such that
\begin{eqnarray*}
E\int |\hat{f}_{n}-f\ast S_{h_n}K_{n}|&\leq& E\int |\hat{f}_{n}-f\ast S_{h_n}K_{(0)}|+h_n\int |f\ast S_{h_n}K_{(s)}| \\
&\leq& E\int |\hat{f}_{n}-f\ast S_{h_n}K_{(0)}|+h_n\int |f|\int |K_{(s)}|.
\end{eqnarray*}
Letting $f_{n}^{(0)}$ be the estimator in \eqref{first} with $K=K_{(0)}$, we have $ E\int |\hat{f}_{n}-f\ast S_{h_n}K_{(0)}| \leq  E\int |f_{n}^{(0)}-f\ast S_{h_n}K_{(0)}| +h_n \int |f| \int|K_{(s)}|$.  Hence,
\begin{equation*}
E\int |\hat{f}_{n}-f\ast S_{h_n}K_{n}| \leq E\int |f_{n}^{(0)}-f\ast S_{h_n}K_{(0)}| + \,2\,h_n \int |f|\int|K_{(s)}|.
\end{equation*}
Since, $h_n \rightarrow 0$ as $n \rightarrow \infty$, the variation of $\hat{f}_{n}$ is asymptotically bounded by the variation of the conventional estimator $f_n$ using $K_{(0)}$, i.e., $E\int |f_{n}^{(0)}-f\ast S_{h_n}K_{(0)}|$.

Under the assumptions that $f$ has a variance, $\int (1+t^2)(K_{(0)}(t))^2dt<\infty$, \cite[Theorem 7.4]{Devroye1987} showed that $E\int |f^{(0)}_{n}-f\ast S_{h_n}K_{(0)}|=O( (nh_n)^{-1/2})$.  Thus,
\begin{equation*}
\sqrt{nh_n}E\int |\hat{f}_{n}-f\ast S_{h_n}K_{n}|= (1+(nh_n^3)^{1/2})O(1)=O(1) ,
\end{equation*}
where the last equality follows if $nh_n^3 \leq c<\infty$.

We now provide an analog for Theorem 7.1 in \cite{Devroye1987}. There, the bias order $O(h^{s+a})$ is achieved for kernels with orders greater than $s$, while in the following theorems we obtain the same order of bias for kernels of order $s$.

\begin{theorem}\label{nthm2}
Let $\{K_n\}$ be a sequence of kernels of order $s$ such that: 1. $\alpha_s(K_n) =O(h_n^a)$; 2. $\underset{n}{\sup} \,\beta_{s+a}(K_n)<\infty$, for some $a \in (0,1]$.  For $f$ with absolutely continuous $f^{(s-1)}$ and $f^{(s)} \in L_1$ assume that for some $0<c <\infty$
\begin{equation}\label{hc}
\omega(\delta)\leq c \,|\delta|^a.
\end{equation}
Then, $\|f\ast S_{h_n}K_n-f\|_1=O(h_n^{s+a})$.
\end{theorem}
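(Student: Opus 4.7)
The plan is to reuse the calculation carried out in the proof of Theorem \ref{thm1} up through the bound
\begin{equation*}
\|f\ast S_{h_n}K_n-f\|_1 \leq \frac{h_n^s}{s!} \left[ \|f^{(s)}\|_1\, |\alpha_s(K_n)| + \int_{0}^{1}\int \omega(\lambda h_n|t|)\, |t^s K_n(t)|\, dt\, s(1-\lambda)^{s-1} d\lambda \right].
\end{equation*}
This inequality follows purely from the Taylor identity that yields \eqref{210}, from adding and subtracting the ``ideal'' piece $f^{(s)}(x)\alpha_s(K_n)$, and from translation-invariance of the $L_1$-norm combined with Fubini. It does not use the ``disappearing moment'' hypothesis of Theorem \ref{thm1}, so it remains available under the assumptions of Theorem \ref{nthm2}.

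Next I would plug in the H\"older estimate \eqref{hc}. Since $\omega(\lambda h_n|t|) \leq c\, \lambda^a h_n^a |t|^a$, the double integral on the right-hand side factors as
\begin{equation*}
c\, h_n^a \left(\int_{0}^{1} s\,\lambda^a(1-\lambda)^{s-1}\, d\lambda\right)\, \beta_{s+a}(K_n) = c\, s\, B(a+1,s)\, h_n^a\, \beta_{s+a}(K_n),
\end{equation*}
where $B$ denotes the Beta function, which is finite because $a>0$ and $s\geq 1$. Combining this with the hypotheses $\alpha_s(K_n)=O(h_n^a)$ and $\sup_n \beta_{s+a}(K_n)<\infty$ yields
\begin{equation*}
\|f\ast S_{h_n}K_n-f\|_1 \leq \frac{h_n^s}{s!}\left[\|f^{(s)}\|_1\, O(h_n^a) + O(h_n^a)\right] = O(h_n^{s+a}),
\end{equation*}
which is the stated conclusion.

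There is no substantial obstacle: the two conditions imposed in Theorem \ref{nthm2} are designed precisely so that the two terms surviving the add-and-subtract step in the proof of Theorem \ref{thm1} each contribute $O(h_n^{s+a})$. One minor point worth flagging is that the splitting of $\mathds{R}$ at $|t|=1/\sqrt{h_n}$ used in Theorem \ref{thm1} to extract the $o(1)$ factor from $\omega(\sqrt{h_n})$ is unnecessary here, because the stronger moment bound $\sup_n \beta_{s+a}(K_n)<\infty$ directly absorbs the extra $|t|^a$ weight produced by the H\"older estimate, so a single global integral suffices.
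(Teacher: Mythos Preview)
Your proposal is correct and follows essentially the same route as the paper: start from the same Taylor/add-and-subtract bound derived in the proof of Theorem \ref{thm1}, insert the H\"older estimate \eqref{hc}, and conclude using conditions 1 and 2. The only cosmetic difference is that you keep the factor $\lambda^a$ and evaluate the $\lambda$-integral as a Beta function, whereas the paper simply bounds $\lambda^a\le 1$ so the $\lambda$-integral collapses to $1$; both give the same $O(h_n^{s+a})$ conclusion.
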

\begin{proof}
As in the proof of Theorem \ref{thm1}, we have
\begin{align}
\|f\ast S_{h_n}K_n-f\|_1& \leq  \frac{h_n^s}{s!} \left[ \left\Vert f^{(s)}\right\Vert _{1}\left\vert
\alpha _{s}(K_n)\right\vert+\int_{0}^{1}\int \omega(\lambda h_n |t|) s(1-\lambda)^{s-1}\left\vert\ t^{s}K_n(t)\right\vert dt d\lambda \right]
\end{align}
By \eqref{hc}
\begin{equation*}
\|f\ast S_{h_n}K_n-f\|_1\leq \frac{h_n^{s}}{s!}\left[ \left\Vert f^{(s)}\right\Vert _{1}\left\vert \alpha _{s}(K_n)\right\vert
+ch_n^{a }\int \int_{0}^{1} s(1-\lambda)^{s-1}d\lambda \left\vert t^{s+a}K_n(t)\right\vert dt\right] .
\end{equation*}
Hence, under conditions 1. and 2. in the statement of the theorem we have $\|f\ast S_{h_n}K_n-f\|_1=O(h_n^{s+a})$.
\end{proof}

\noindent \bf Remark 4. \rm Practitioners may find condition \eqref{hc} too general, preferring more primitive conditions on $f^{(s)}$.  To this end, we say that a function $g$ defined on $\mathds{R}$ satisfies a \emph{global Lipschitz condition} of order $a \in (0,1]$ if there exist positive functions $l(x),r(x)$ such that
\begin{equation}\label{3}
|g(x-h)-g(x)|\leq l(x)|h|^{a }\text{ for }|h|\leq r\left( x\right),\, x\in \mathds{R}.
\end{equation}
The function $l$ is called a Lipschitz constant and the function $r$ is called a Lipschitz radius.  The class $Lip(a,\delta)$, for $\delta>1$, is defined as the set of functions $g$ which satisfy \eqref{3} with $l$ and $r$ such that
\begin{equation}\label{intcon}
\int (l(x)+r(x)^{-\delta})dx<\infty.
\end{equation}
In the next lemma we give two sufficient sets of conditions for $g\in Lip(a,\delta)$.  In the first case $g$ is compactly supported, and in the second it is not.

\begin{lemma}\label{lemma} a) Suppose $g$ has compact support, ${\rm supp}\, g\subseteq [-N,N]$ for some $N>0$, and $g$ satisfies the usual Lipschitz condition $|g(x-h)-g(x)|\le c|h|^a$ for any $x,h$ and some $a\in(0,1]$. Set $l(x)=c,\ r(x)=1$ for $|x|<N$ and $l(x)=0,\ r(x)=|x|-N$ for $|x|\ge N$. Then, $g\in Lip(a,\delta)$ with any $\delta>1$.

\noindent b) Suppose that $ |g^{(1)}(t)|\le ce^{-|t|},\ t\in \mathds{R}$. Let $l(x)=c\exp(-|x|/2+1/2),\ r(x)=(1+|x|)/2,\ x\in \mathds{R}$. Then, $g\in Lip(1,\delta)$ with any $\delta>1$.
\end{lemma}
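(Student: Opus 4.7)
The plan is to verify, in each part, the two defining conditions of the class $Lip(a,\delta)$: the pointwise inequality \eqref{3} for the specified $l$ and $r$, and the integrability condition \eqref{intcon}.

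For part (a), I would split $\mathds{R}$ into the regions $\{|x|<N\}$ and $\{|x|\ge N\}$. On the first, with $l(x)=c$ and $r(x)=1$, the inequality $|g(x-h)-g(x)|\le c|h|^a$ for $|h|\le 1$ is immediate from the assumed global Lipschitz condition on $g$. On the second, with $l(x)=0$, it suffices to show that $g(x-h)=g(x)=0$ whenever $|h|\le |x|-N$; by the triangle inequality $|x-h|\ge |x|-|h|\ge N$, so $x-h$ lies outside ${\rm supp}\,g\subseteq[-N,N]$, and $x$ does too, so both values vanish. For the integrability condition, $\int l\,dx=2Nc<\infty$ is trivial, and the tail of $\int r^{-\delta}dx$ reduces to $2\int_N^\infty(x-N)^{-\delta}dx$, which is controlled at infinity whenever $\delta>1$.

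For part (b), the central step is the mean value theorem combined with the exponential bound on $g^{(1)}$:
\[
|g(x-h)-g(x)|\le |h|\sup_{|t-x|\le |h|}|g^{(1)}(t)|\le c|h|\sup_{|t-x|\le |h|}e^{-|t|}.
\]
For any $t$ in the interval $[x-|h|,x+|h|]$ we have $|t|\ge |x|-|h|$, so the supremum is bounded by $e^{|h|-|x|}$ (trivially so in the case $|h|\ge|x|$, since then $e^{|h|-|x|}\ge 1$). Imposing $|h|\le r(x)=(1+|x|)/2$ gives $|h|-|x|\le(1-|x|)/2$, whence $|g(x-h)-g(x)|\le c e^{1/2}e^{-|x|/2}|h|=l(x)|h|$, verifying \eqref{3} with $a=1$. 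The integrability of $l$ follows from that of $e^{-|x|/2}$, and $\int r^{-\delta}dx=2^\delta\int(1+|x|)^{-\delta}dx<\infty$ for $\delta>1$.

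I do not anticipate a major obstacle: the crux of both parts is matching the tradeoff between $l$ and $r$ built into \eqref{intcon}—in (a) by letting $l$ vanish outside ${\rm supp}\,g$ so that $r$ may grow with the distance to the support, and in (b) by exploiting the exponential decay of $g^{(1)}$ so that $r$ may grow linearly while $l$ decays exponentially. Once the right $l$ and $r$ are prescribed as in the statement, the verification is essentially bookkeeping, with the only mild care point being the handling of small $|x|$ in part (b) where the interval $[x-|h|,x+|h|]$ may contain the origin.
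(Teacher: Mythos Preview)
Your approach mirrors the paper's closely: in both parts you verify the pointwise inequality by cases and then check integrability; in part (b) you even streamline slightly, avoiding the paper's split into $|x|\ge1$ and $|x|<1$ by using $|t|\ge|x|-|h|$ uniformly together with the observation that $e^{|h|-|x|}\ge1$ when $|h|\ge|x|$.

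There is, however, a genuine gap in part (a), and it is one the paper's own proof also glosses over. With $r(x)=|x|-N$ for $|x|\ge N$, the integral $\int r(x)^{-\delta}\,dx$ is \emph{infinite}: you say that $2\int_N^\infty(x-N)^{-\delta}\,dx$ is ``controlled at infinity whenever $\delta>1$,'' but the integrand also blows up at the lower endpoint, and $\int_0^1 u^{-\delta}\,du=\infty$ for every $\delta\ge1$. Thus the $r$ prescribed in the statement does not satisfy \eqref{intcon}, so neither your argument nor the paper's actually establishes $g\in Lip(a,\delta)$ with these particular $l,r$. The repair is routine: for instance, redefine $l(x)=c,\ r(x)=1$ on $|x|<N+1$ and $l(x)=0,\ r(x)=|x|-N$ on $|x|\ge N+1$. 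The pointwise inequality still holds (on $|x|<N+1$ by the global Lipschitz hypothesis; on $|x|\ge N+1$ because $|h|\le|x|-N$ forces $|x-h|\ge N$, whence $g(x-h)=g(x)=0$), and now $\int r^{-\delta}=2(N+1)+2\int_1^\infty u^{-\delta}\,du<\infty$ for $\delta>1$.
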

\begin{proof}
a) If $|x|<N$, then $|g(x-h)-g(x)|\le|h|^al(x)$ for all $h$ (and not only for $|h|\le r(x)$). If $|x|\ge N$, then $|h|\le r(x)=|x|-N$ implies $|x-h|\ge |x|-|h|\ge N$, so that $|g(x-h)-g(x)|=0=|h|^al(x)$ for $|h|\le r(x)$.

b) Let $|x|\ge 1$. We have
\begin{equation}\label{bound}
|g(x-h)-g(x)|\le|h|\sup_{|t-x|\le |h|} |g^{(1)}(t)|\le|h|c\sup_{|t-x|\le |h|}e^{-|t|}.
\end{equation}
$|t-x|\le|h|\le r(x)=(1+|x|)/2$ implies $|t|=|x+t-x|\ge |x|-|t-x|\ge |x|/2-1/2\ge0$ and \eqref{bound} gives $|g(x-h)-g(x)|\le|h|c\exp(1/2-|x|/2)=|h|l(x)$ for $|h|\le r(x)$. Now let $|x|<1$. Then, $e^{-|x|/2}\ge e^{-1/2}$ so that by \eqref{bound}, $|g(x-h)-g(x)|\le|h|c\le|h|c\exp(1/2-|x|/2)=|h|l(x)$ for all $h$ and not only for $|h|\le r(x)$.  Condition \eqref{intcon} is obviously satisfied in both cases.
\end{proof}
By part a) of Lemma 1, compactly supported densities with derivative $f^{(s)}$ that satisfies the usual $a$-Lipschitz condition are such that $f^{(s)} \in Lip(a,\delta)$ for any $\delta >1$.  This corresponds to the case treated in Theorem 7.1 of \cite{Devroye1987}.  Part b) shows that for densities with unbounded domains, not covered by Theorem 7.1, if $f^{(s)}(x)$ has derivative that decays exponentially as $|x| \rightarrow \infty$, then $f^{(s)}\in Lip(1,\delta)$ for any $\delta >1$.  Next we provide a version of Theorem \ref{nthm2} for densities with derivative $f^{(s)} \in Lip(a,\delta)$. 

\begin{theorem}\label{nthm3}
Suppose that the density $f$ is such that its derivative $f^{(s)}$ belongs to $L_1$, $C$ (the respective norms are finite) and $Lip(a,\delta)$.  Let $\{K_n\}$ be a sequence of kernels of order $s$ such that: 1. $\alpha_s(K_n) =O(h_n^a)$; 2. $\underset{n}{\sup} \,\max\{\beta_{s+a}(K_n),\beta_{s+\delta}(K_n)\}<\infty$. Then, $\|f\ast S_{h_n}K_n-f\|_1=O(h_n^{s+a})$.
\end{theorem}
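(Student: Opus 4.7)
\noindent \textbf{Proof plan for Theorem \ref{nthm3}.}
The plan is to mimic the proof of Theorem \ref{thm1} up to the point where the bound involves $|f^{(s)}(x-h_n\lambda t)-f^{(s)}(x)|$, and then, in place of the global modulus of continuity argument, exploit the \emph{local} Lipschitz control coming from $f^{(s)}\in Lip(a,\delta)$ together with the boundedness $\|f^{(s)}\|_C<\infty$. The role of the integrability condition \eqref{intcon} will be to tame the set on which the local Lipschitz inequality does not apply.

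First, repeating the Taylor expansion and the argument that led to display \eqref{213} (which does not use any smoothness of $f^{(s)}$ beyond $f^{(s)}\in L_1$), I obtain
\begin{equation*}
\|f\ast S_{h_n}K_n-f\|_1 \le \frac{h_n^s}{s!}\Bigl[\|f^{(s)}\|_1|\alpha_s(K_n)|+\int_0^1\!\!\int\!\!\int|f^{(s)}(x-h_n\lambda t)-f^{(s)}(x)|\,dx\,|t^sK_n(t)|\,dt\,s(1-\lambda)^{s-1}d\lambda\Bigr].
\end{equation*}
The first bracketed term is $O(h_n^a)$ by assumption~1, so the work is in bounding the inner $x$-integral.

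Next, for fixed $\lambda\in[0,1]$ and $t\in\mathds{R}$, I split $\mathds{R}=\{x:r(x)\ge h_n\lambda|t|\}\cup\{x:r(x)<h_n\lambda|t|\}$. On the first set condition \eqref{3} applies with $h=h_n\lambda t$, giving a pointwise bound $l(x)(h_n\lambda|t|)^a$, whose integral in $x$ is at most $(h_n\lambda|t|)^a\int l(x)\,dx$. On the second set I use the crude bound $|f^{(s)}(x-h_n\lambda t)-f^{(s)}(x)|\le 2\|f^{(s)}\|_C$ and estimate the Lebesgue measure of the ``bad set'' by Markov's inequality applied to $r^{-\delta}$:
\begin{equation*}
\bigl|\{x:r(x)<h_n\lambda|t|\}\bigr|=\bigl|\{x:r(x)^{-\delta}>(h_n\lambda|t|)^{-\delta}\}\bigr|\le(h_n\lambda|t|)^{\delta}\int r(x)^{-\delta}\,dx.
\end{equation*}
Combining the two pieces,
\begin{equation*}
\int|f^{(s)}(x-h_n\lambda t)-f^{(s)}(x)|\,dx\le(h_n\lambda|t|)^a\|l\|_1+2\|f^{(s)}\|_C\|r^{-\delta}\|_1(h_n\lambda|t|)^{\delta}.
\end{equation*}

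Now I integrate against $|t^sK_n(t)|\,dt\,s(1-\lambda)^{s-1}d\lambda$. The $t$-integrals produce exactly $\beta_{s+a}(K_n)$ and $\beta_{s+\delta}(K_n)$, both bounded in $n$ by assumption~2, while the $\lambda$-integrals give finite Beta-function constants $\int_0^1\lambda^a s(1-\lambda)^{s-1}d\lambda$ and $\int_0^1\lambda^\delta s(1-\lambda)^{s-1}d\lambda$. Hence the bracketed term is $O(h_n^a)+O(h_n^{\delta})$, and since $\delta>1\ge a$ and $h_n\to 0$ we have $h_n^{\delta}=O(h_n^a)$. Multiplying by $h_n^s/s!$ yields the claimed $O(h_n^{s+a})$.

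The main obstacle, and the only step that genuinely differs from Theorems \ref{thm1} and \ref{nthm2}, is handling the region where \eqref{3} fails. The resolution is the Markov-type estimate above, which converts the integrability $\int r(x)^{-\delta}dx<\infty$ into a polynomial measure bound; it is precisely to absorb this ``tail'' term into the $O(h_n^{s+a})$ rate that the hypothesis $\sup_n\beta_{s+\delta}(K_n)<\infty$ (rather than just $\sup_n\beta_{s+a}(K_n)<\infty$) is required.
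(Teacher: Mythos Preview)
Your proof is correct and follows essentially the same route as the paper: both split according to whether $h_n\lambda|t|\le r(x)$, apply the Lipschitz bound \eqref{3} on the good region, and control the bad region via $\int r^{-\delta}<\infty$ together with $\|f^{(s)}\|_C$, producing exactly the $\beta_{s+a}(K_n)$ and $\beta_{s+\delta}(K_n)$ terms. The only cosmetic difference is that the paper fixes $x$ and splits the $t$-integral (writing $|t|^{-\delta}<(h_n\lambda)^{\delta}r(x)^{-\delta}$ on the bad region), whereas you fix $t$ and split the $x$-integral via a Markov-type bound on $r^{-\delta}$; the two are the same estimate read through Fubini.
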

\begin{proof}
As in the proof of Theorem \ref{thm1}, we have
\begin{equation*}
\|f\ast S_{h_n}K_n-f\|_1 \leq  \frac{h_n^s}{s!} \left[ \left\Vert f^{(s)}\right\Vert _{1}\left\vert
\alpha _{s}(K_n)\right\vert+\int_{0}^{1}\int\int\left\vert f^{(s)}(x-h_n\lambda t)-f^{(s)}(x)\right\vert | t^{s}K_n(t)| dt\,  dx\, s(1-\lambda)^{s-1}d\lambda\right].
\end{equation*}
Let $\mathcal{I}(x)=\int\left\vert f^{(s)}(x-h_n\lambda t)-f^{(s)}(x)\right\vert | t^{s}K_n(t)|  dt $ and note that since $f^{(s)}\in Lip(a,\delta)$
\begin{align*}
\mathcal{I}(x)&=\left( \underset{\lambda h_n|t| \leq r(x)}{\int} + \underset{\lambda h_n|t| > r(x)}{\int} \right)\left\vert f^{(s)}(x-h_n\lambda t)-f^{(s)}(x)\right\vert | t^{s}K_n(t)|  dt\\
& \leq \underset{\lambda h_n|t| \leq r(x)}{\int} l(x) \lambda^a h_n^a |t|^{a+s}|K_n(t)|dt + \underset{\lambda h_n|t|>r(x)}{\int} |f^{(s)}(x-h_n\lambda t)-f^{(s)}(x)| |t^{s}K_n(t)|  dt \\
&\leq h_n^a \beta_{s+a}(K_n)l(x)+  \underset{\lambda h_n|t|>r(x)}{\int} |f^{(s)}(x-h_n\lambda t)-f^{(s)}(x)| |t^{s}K_n(t)|  dt .
\end{align*}
Letting $\mathcal{I}_1(x)=  \underset{\lambda h_n|t|>r(x)}{\int} |f^{(s)}(x-h_n\lambda t)-f^{(s)}(x)| |t^{s}K_n(t)|  dt$ we have
\begin{align*}
\mathcal{I}_1(x)&\leq \underset{\lambda h_n|t| > r(x)}{\int} \frac{1}{|t|^\delta} \left(|f^{(s)}(x-h_n\lambda t)|+|f^{(s)}(x)|\right) |t|^{s+\delta}|K_n(t)|  dt.
\end{align*}
Noting that $|t|^{-\delta}<\lambda^\delta h_n^\delta r(x)^{-\delta}$ and given that $\|f^{(s)}\|_C<\infty$, we obtain
$
\mathcal{I}_1(x)\leq  2 \|f^{(s)}\|_C\,\frac{\lambda^\delta h_n^\delta}{r(x)^\delta}\beta_{s+\delta}(K_n)
$.  Consequently,
\begin{equation}
\mathcal{I}(x) \leq   h_n^a \max\{\beta_{s+a}(K_n),\beta_{s+\delta}(K_n)\} \left( l(x)+  2 h_n^{\delta-a} \|f^{(s)}\|_C\,\frac{1}{r(x)^\delta} \right).
\end{equation}
Since $\int_0^1s(1-\lambda)^{s-1}ds=\frac{1}{s}$ and given \eqref{intcon}
\begin{equation}
\|f\ast S_{h_n}K_n-f\|_1 \leq  \frac{h_n^s}{s!} \left[ \left\Vert f^{(s)}\right\Vert _{1}\left\vert
\alpha _{s}(K_n)\right\vert+ \frac{1}{s} h_n^a \max\{\beta_{s+a}(K_n),\beta_{s+\delta}(K_n)\} \int  \left( l(x)+  2 \|f^{(s)}\|_C\,\frac{1}{r(x)^\delta} \right)dx\right].
\end{equation}
Thus, using conditions 1. and 2. in the statement of the theorem, we have $\|f\ast S_{h_n}K_n-f\|_1=O(h_n^{s+a})$.
\end{proof}

\noindent \bf Remark 5. \rm As in the case of Theorem \ref{thm1}, Theorems \ref{nthm2} and \ref{nthm3} do not address the construction of the kernel sequence $\{K_n\}$.  The following corollary to Theorem \ref{nthm3} shows that $K_n=K_{(0)}+h_n^a K_{(s)}$ is a suitable kernel sequence, where $K_{(0)}$ and $K_{(s)}$ are as defined above.

\begin{corollary}\label{cor2}
Suppose the density $f$ is such that its derivative $f^{(s)}$ belongs to $L_1$, $C$ and to $Lip(a,\delta)$, where $a\in(0,1],\ \delta>1$.  Let $K_{(0)},K_{(s)}$ satisfy \eqref{214} and belong to the intersection $\mathcal{B}_{s+a}\cap \mathcal{B}_{s+\delta}$.  Put $K_{n}=K_{(0)}+h_n^{a }K_{(s)},$ $0\leq h_n\leq 1.$ Then $K_{n}$
is a kernel of order $s$ for $h_n>0$ and $ \|f\ast
S_{h_n}K_{n}-f\|_1=O(h_n^{s+a }).$
\end{corollary}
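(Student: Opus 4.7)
The plan is to reduce the statement to a direct application of Theorem \ref{nthm3} by verifying, for the explicit sequence $K_n = K_{(0)} + h_n^{a} K_{(s)}$, the two hypotheses of that theorem together with the fact that each $K_n$ is a kernel of order $s$. The smoothness and integrability conditions on $f^{(s)}$ are already assumed in the corollary exactly as required by Theorem \ref{nthm3}, so no further work is needed on the density side.

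First I would compute the moments $\alpha_j(K_n) = \alpha_j(K_{(0)}) + h_n^{a}\, \alpha_j(K_{(s)})$ for $j = 0, 1, \ldots, s$. Using the defining relations \eqref{214}, this yields $\alpha_0(K_n) = 1$, $\alpha_j(K_n) = 0$ for $j = 1, \ldots, s-1$, and $\alpha_s(K_n) = h_n^{a}$, which is nonzero for $h_n > 0$. Thus $K_n$ is a kernel of order $s$, and condition 1 of Theorem \ref{nthm3} holds with $\alpha_s(K_n) = h_n^{a} = O(h_n^{a})$.

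Next, for condition 2, I would use the triangle inequality applied to the weight $|t|^p$: for any $p \geq 0$,
\begin{equation*}
\beta_p(K_n) \leq \beta_p(K_{(0)}) + h_n^{a}\, \beta_p(K_{(s)}) \leq \beta_p(K_{(0)}) + \beta_p(K_{(s)}),
\end{equation*}
where the second inequality uses $0 \leq h_n \leq 1$. Applying this with $p = s+a$ and $p = s+\delta$, and invoking the hypothesis $K_{(0)}, K_{(s)} \in \mathcal{B}_{s+a} \cap \mathcal{B}_{s+\delta}$, I obtain a bound on $\max\{\beta_{s+a}(K_n), \beta_{s+\delta}(K_n)\}$ that is independent of $n$. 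Theorem \ref{nthm3} then applies directly and delivers $\|f \ast S_{h_n} K_n - f\|_1 = O(h_n^{s+a})$.

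There is no real obstacle; the corollary is essentially a bookkeeping check. The only point worth emphasizing is that the scaling factor $h_n^{a}$ attached to $K_{(s)}$ is chosen precisely so that $\alpha_s(K_n)$ decays at the Lipschitz rate demanded by hypothesis 1 of Theorem \ref{nthm3}, while the assumption $K_{(0)}, K_{(s)} \in \mathcal{B}_{s+a} \cap \mathcal{B}_{s+\delta}$, rather than merely $\mathcal{B}_s$ as in Remark 1, is what keeps the higher-order moments $\beta_{s+a}$ and $\beta_{s+\delta}$ finite, which are needed to control the two regions in the splitting used in the proof of Theorem \ref{nthm3}.
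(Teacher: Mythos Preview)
Your proposal is correct and matches the paper's intent: the paper presents this corollary as an immediate consequence of Theorem \ref{nthm3} and does not write out a separate proof, so your verification of hypotheses 1 and 2 of Theorem \ref{nthm3} for the explicit sequence $K_n=K_{(0)}+h_n^aK_{(s)}$ is exactly the bookkeeping the paper leaves implicit. The moment computation from \eqref{214} and the uniform bound on $\beta_{s+a}(K_n)$ and $\beta_{s+\delta}(K_n)$ via $0\le h_n\le 1$ are the only points to check, and you handle both correctly.
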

\noindent The condition $K_{(0)} \in \mathcal{B}_{s+a }$ and the definition $K_{n}=K_{(0)}+h_n^{a }K_{(s)}$ can be replaced by $K_{(0)} \in \mathcal{B}_{s+1}$ and $K_{n}=K_{(0)}+h_nK_{(s)}$, respectively, without affecting the conclusion.

\setlength{\baselineskip}{12pt}
\bibliographystyle{elsart-harv.bst}
\bibliography{mynbaev_martins-filho_L1(2016)v4.bbl}

\begin{thebibliography}{8}
\expandafter\ifx\csname natexlab\endcsname\relax\def\natexlab#1{#1}\fi
\expandafter\ifx\csname url\endcsname\relax
  \def\url#1{\texttt{#1}}\fi
\expandafter\ifx\csname urlprefix\endcsname\relax\def\urlprefix{URL }\fi

\bibitem[{Devroye(1987)}]{Devroye1987}
Devroye, L., 1987. {A course in density estimation}. {Birkh\"auser}, Boston,
  MA.

\bibitem[{Devroye and Gy\"orfi(1985)}]{Devroye1985}
Devroye, L., Gy\"orfi, L., 1985. {Nonparametric density estimation: the $L_1$
  view}. John Wiley and Sons, New York, NY.

\bibitem[{Mynbaev et~al.(2014)Mynbaev, Nadarajah, Withers, and
  Aipenova}]{Mynbaev2014}
Mynbaev, K., Nadarajah, S., Withers, C., Aipenova, A., 2014. Improving bias in
  kernel density estimation. Statistics and Probability Letters 94, 106--112.

\bibitem[{Parzen(1962)}]{Parzen1962}
Parzen, E., 1962. {On estimation of a probability density and mode}. Annals of
  Mathematical Statistics 33, 1065--1076.

\bibitem[{Rosenblatt(1956)}]{Rosenblatt1956}
Rosenblatt, M., 1956. {Remarks on some nonparametric estimates of a density
  function}. Annals of Mathematical Statistics 27, 832--837.

\bibitem[{Silverman(1986)}]{Silverman1986}
Silverman, B.~W., 1986. {Density estimation for statistics and data analysis}.
  Chapman and Hall, London.

\bibitem[{Tsybakov(2009)}]{Tsybakov2009}
Tsybakov, A.~B., 2009. Introduction to nonparametric estimation.
  Springer-Verlag, New York, NY.

\bibitem[{Zhuk and Natanson(2003)}]{Zhuk2003}
Zhuk, V.~V., Natanson, G.~I., 2003. {Seminorms and continuity modules of
  functions defined on a segment}. Journal of Mathematical Sciences 118,
  4822--4851.

\end{thebibliography}
\end{document}